\newcommand {\supplus}{\mathop{{\subset}\llap{\raise 0.5pt\hbox{\normalfont\small+}\hskip 0.5pt}}}
\newcommand{\dfs}{{/\kern-2pt/}}
\numberwithin{equation}{section}
\newtheoremstyle{notes} {} {} {} {} {\bfseries} {.} {.5em} {}
\theoremstyle{plain}
\newtheorem{prop}{Proposition}
\newtheorem{lemma}{Lemma}
\theoremstyle{remark}
\pretocmd{\appendix}{\addtocontents{toc}{\protect\addvspace{10\p@}}}{}{}
\theoremstyle{remark}
\newtheoremstyle{construction} {} {} {} {} {\bfseries} { } {0pt} {}
\theoremstyle{construction}
\author[Ivan Penkov]{\;Ivan Penkov}
\address{
Ivan Penkov
\newline Jacobs University Bremen
\newline Campus Ring 1
\newline 28759 Bremen, Germany}
\email{i.penkov@jacobs-university.de}
\begin{document}
\bibliographystyle{amsplain}

\ifx\useHugeSize\undefined
\else
\Huge
\fi

\relax

\title{ examples of automorphism groups of ind-varieties of generalized flags}
\begin{center}
\it To the memory if Vasil Tsanov
\end{center}

\date{ \today }

\begin{abstract} We compute the automorphism groups of finite and cofinite ind-grassmannians, as well as of the ind-variety of maximal flags indexed by $\mathbb Z_{>0}$. We pay special attention to differences with the case of ordinary flag varieties. 
\end{abstract}
\maketitle

\medskip\noindent {\footnotesize 2010 AMS Subject classification: 14L30, 14M15, 14M17} \\
\noindent {\footnotesize Keywords: ind-grassmannian, automorphism group, flag ind-variety.}

\section{Introduction}
The flag varieties of the classical Lie groups are central objects of study both in geometry and representation theory. In a sense, they are a hub for many directions of research in both fields. Several different infinite-dimensional analogues of the ordinary flag varieties have been studied in the literature, one such analogue being the ind-varieties of generalized flags introduced in \cite{DP} and further investigated in \cite{PT}, \cite{IPW}, \cite{FP1}, \cite{FP2}; see also the survey \cite{IP}. The latter ind-varieties are direct limits of classical flag varieties and are homogeneous ind-spaces for the simple ind-groups $SL(\infty)$, $SO(\infty)$, $Sp(\infty)$. Without doubt, some of these ind-varieties, in particular the ind-grassmannians, have been known long before the paper \cite{DP}. 

A natural question of obvious importance is the question of finding the automorphism groups of the ind-varieties of generalized flags. The purpose of the present note is to initiate a discussion in this direction and to point out some differences with the case of ordinary flag varieties: see Section 4. The topic is very close to Vasil's interests and expertise, and for sure I would have discussed it with him if he were still alive.

{\bf Acknowledgments.} This work has been supported in part by DFG grant PE 980/7-1. 

\section{Automorphisms of finite and cofinite ind-grassmannians}

The base field is $\mathbb C$. Let $V$ be a fixed countable-dimensional complex vector space.  We fix a basis $E=\{e_1,\dots,e_n,\dots\}$ of $V$ and set $V_n:=\operatorname{span}_{\mathbb C}\{e_1,\dots,e_n\}$. Then $V=\cup_n V_n$. Fix $k\in \mathbb{Z}_{>0}$. By definition, $\operatorname{Gr}(k,V)$ is the set of all $k$-dimensional subspaces in $V$ and has an obvious ind-variety structure:
$$\operatorname{Gr}(k,V)=\lim_{\longrightarrow}\operatorname{Gr}(k,V_n).$$

The projective ind-space $\mathbb{P}(V)$ equals $\operatorname{Gr}(1,V)$. Note that the basis $E$ plays no role in this construction. We think  of the ind-varieties $\operatorname{Gr}(k,E)$ for $k\in\mathbb{Z}_{>0}$ as the "finite ind-grassmannians."

The basis $E$ plays a role when defining the "cofinite" ind-grassmannians. Fix a subspace $W\subset V$ of finite codimension in $V$ and such that $E\cap W$ is a basis of $W$. Let $\operatorname{Gr}(W,E,V)$ be the set of all subspaces $W'\subset V$ which have the same codimension in $V$ as $W$ and in addition contain almost all elements of $E$. Then $\operatorname{Gr}(W,E,V)$ has the following ind-variety structure:
$$\operatorname{Gr}(W,E,V)=\lim_{\longrightarrow}\operatorname{Gr} (\operatorname{codim}_V W, \bar V_n)$$
where $\{\bar V_n\}$ is any set of finite-dimensional spaces with the properties that $\bar V_n\supset\operatorname{span}\{E\backslash \{E\cap W\}\}$, $\operatorname{dim}\bar V_n=n>\operatorname{codim}_V W$, $E\cap \bar V_n$ is a basis of $\bar V_n$, and $\cup \bar V_n=V$.

It is clear that the ind-varieties $\operatorname{Gr}(W,E,V)$ and $\operatorname{Gr}(k,V)$ are isomorphic: the isomorphism is given by

\begin{equation}\label{new}\tag{1}
\operatorname{Gr}(W,E,V)\ni W'  \to \operatorname{Ann}W'\subset V_* := \operatorname{span}\{E^*\},
\end{equation}	
where $E^*=\{e_1^*,e_2^*,\dots\}$ is the system of linear functionals dual to the basis $E$: i.e. $e^*_i(e_j)=\delta_{ij}$. The map (\ref{new}) is an obvious analogue of finite-dimensional duality. Therefore the automorphism groups $\operatorname{Aut}\operatorname{Gr}(k,V)$ and $\operatorname{Aut}\operatorname{Gr}(W,E,V)$ for $\operatorname{codim}_W V=k$ are isomorphic; by an automorphism we mean of course an automorphism of ind-varieties.

The following result should in principle be known. We present a proof which shows a connection with the work \cite{PT}.

\begin{prop}
$\operatorname{Aut} \operatorname{Gr}(k,V)=PGL(V)$ where $GL(V)$ denotes the group of all invertible linear operators on $V$ and $PGL(V):= GL(V)/\mathbb{C}_{mult}Id$ (where $\mathbb C_{mult}$ is the multiplicative group of $\mathbb C$).
\end{prop}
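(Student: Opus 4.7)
The inclusion $PGL(V) \subseteq \operatorname{Aut} \operatorname{Gr}(k,V)$ is immediate: $GL(V)$ acts on $\operatorname{Gr}(k,V)$ by ind-variety automorphisms, and the kernel of this action is exactly the subgroup of scalar operators, since any $g\in GL(V)$ fixing every $k$-dimensional subspace of $V$ acts trivially on $\mathbb P(V)$ (fix any line $\ell$ as the intersection of two $k$-planes containing it) and is therefore a scalar. Only the reverse inclusion requires real work.

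Let $\phi \in \operatorname{Aut} \operatorname{Gr}(k,V)$. Since $\operatorname{Gr}(k,V_n)$ is an irreducible projective variety and $\operatorname{Gr}(k,V) = \lim_{\longrightarrow}\operatorname{Gr}(k,V_m)$, the image $\phi(\operatorname{Gr}(k,V_n))$ is a closed irreducible subvariety of $\operatorname{Gr}(k,V)$ and hence lies in some finite stage $\operatorname{Gr}(k,V_{m(n)})$. Thus $\phi$ restricts to a closed embedding $\phi_n \colon \operatorname{Gr}(k,V_n) \hookrightarrow \operatorname{Gr}(k,V_{m(n)})$ between finite grassmannians of the same type. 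The plan is then to invoke the classification of such embeddings from \cite{PT}: for $n$ large enough, every such embedding is \emph{standard}, i.e.\ induced by a linear injection $L_n \colon V_n \hookrightarrow V_{m(n)}$, determined uniquely up to a nonzero scalar. Two successive inducing maps $L_{n+1}|_{V_n}$ and $L_n$ then differ by a nonzero scalar; rescaling inductively, one obtains a compatible system whose colimit is a linear map $L \colon V \to V$. Applying the same reasoning to $\phi^{-1}$ shows $L$ is invertible, and hence $\phi = [L] \in PGL(V)$.

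The main obstacle is showing that each $\phi_n$ is standard rather than an exotic closed embedding. The chief alternative in the finite-dimensional setting is the dualization isomorphism $\operatorname{Gr}(k,V_n) \cong \operatorname{Gr}(k,V_n^*)$, available precisely when $\dim V_n = 2k$; however, for $n > 2k$ the grassmannian $\operatorname{Gr}(k,V_n)$ is no longer self-dual, so \cite{PT} forces $\phi_n$ to be standard for such $n$, and compatibility of the direct system $\{\phi_n\}$ propagates this property back to all $n$. Other candidate non-standard embeddings (for instance compositions with Pl\"ucker-type maps of higher degree) are ruled out by the fact that $\phi$ is a global automorphism and therefore acts as the identity on the rank-one Picard group of $\operatorname{Gr}(k,V)$, which constrains each $\phi_n$ to pull back the ample Pl\"ucker class to itself and hence to have degree one. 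The bookkeeping of scalars during the rescaling is routine.
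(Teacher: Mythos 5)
Your overall strategy coincides with the paper's: restrict $\phi$ to the finite stages to get closed embeddings $\phi_n\colon \operatorname{Gr}(k,V_n)\hookrightarrow \operatorname{Gr}(k,V_{N(n)})$, note that they are linear (pull back the Plücker class to the Plücker class), invoke the Penkov--Tikhomirov classification of linear embeddings of grassmannians to conclude standardness, and then rescale the inducing linear maps into a compatible system with colimit $L\in GL(V)$. The gap is in the step where you rule out non-standard embeddings. The classification in \cite{PT} does not say that a linear embedding of a non-self-dual grassmannian must be standard: besides standard extensions, there is a genuine second class, namely linear embeddings that factor through a linearly embedded projective subspace $\mathbb{P}^{M}\subset \operatorname{Gr}(k,V_{N(n)})$ (for instance, the Plücker embedding of $\operatorname{Gr}(k,V_n)$ followed by a linear embedding of the ambient projective space as the family of $k$-planes containing a fixed $(k-1)$-plane). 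These embeddings exist for all $n$ and all $k$, have nothing to do with self-duality, and \emph{do} pull back the ample generator of the Picard group to the ample generator; so neither your ``$n>2k$ kills the dual'' argument nor your degree-one Picard argument excludes them. Your purported exhaustive list of alternatives (duality, higher-degree Plücker-type maps) is simply not the correct dichotomy.

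The paper excludes this second class by a global, not stage-by-stage, argument: if $\phi_n$ factored through a projective subspace for infinitely many $n$, then the image of the whole automorphism would lie in a projective ind-subspace $\mathbb{P}=\lim_{\longrightarrow}\mathbb{P}^{M(n)}\subset \operatorname{Gr}(k,V)$, which would force $\operatorname{Gr}(k,V)$ to be a proper ind-subvariety of itself since $\mathbb{P}$ is not isomorphic to $\operatorname{Gr}(k,V)$ by Theorem~2 of \cite{PT} (for $k=1$ the two options coincide, so nothing needs to be excluded). Some such global input is unavoidable, because the finite-dimensional classification alone cannot distinguish the two classes of linear embeddings at any single stage. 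Your remaining steps --- the scalar ambiguity of the inducing maps, the inductive rescaling, invertibility via $\phi^{-1}$, and the identification of the kernel of the $GL(V)$-action with the scalars --- are correct and match the paper.
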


\begin{proof} An automorphism $\phi: \operatorname{Gr}(k,V)\to \operatorname{Gr}(k,V)$ induces embeddings $\phi_n: \operatorname{Gr}(k,V_n)\hookrightarrow\operatorname{Gr}(k,V_{N(n)})$ for appropriate $N(n)\geq n$. These embeddings are linear in the sense that \\$\phi^*_n( \mathcal O_{\operatorname{Gr}(k,V_{N(n)})}(1))$ is isomorphic to $\mathcal O_{\operatorname{Gr}(k,V_n)}(1)$, where by $\mathcal{O}_{\cdot}(1)$ we denote the positive generator of the respective Picard group. According to Theorem 1 in \cite{PT}, $\phi_n$ is one of the following:

\begin{enumerate}[(i)]
\item an embedding induced by the choice of an $n$-dimensional subspace $W_n\subset V_{N(n)}$ for some $N(n)\geq n$,
\item an embedding factoring through  a linearly embedded projective space $\mathbb{P}^{M(n)}\subset  \operatorname{Gr}(k, V_{N(n)})$ for some $M(n)<N(n)$.
\end{enumerate}

If $k>2$, option (ii) may hold only for  finitely many $n$ as the contrary implies that the image of $\phi_n$ is contained in a projective ind-subspace $$\mathbb{P}:=\lim_{\longrightarrow} \mathbb{P}^{M(n)}\subset  \operatorname{Gr}(k,V).$$ Then, since $\mathbb{P}$ is not isomorphic to $ \operatorname{Gr}(k,V)$ by Theorem 2 in \cite{PT}, the image of $\phi_n$ would necessarily be a proper ind-subvariety of $ \operatorname{Gr}(k,V)$, which is a contradiction.

For $k=1$, options (i) and (ii) are the same, and therefore without loss of generality we can now assume that for our fixed $k$ option (i) holds for all $n$. The embeddings $\phi_n: \operatorname{Gr}(k,V_n)\hookrightarrow  \operatorname{Gr}(k,V_{N(n)})$ determine injective linear operators $\tilde{\phi_n}:V_n\to V_{N(n)}$. Moreover, the operators $\tilde \phi_n$ are defined up to multiplicative constants which can be chosen so that $\tilde{\phi}_{n}|_{V_{n-1}}=\tilde{\phi}_{n-1}$ for any $n$. Therefore, we obtain a well-defined linear operator
$$\tilde{\phi}: V=\lim_{\longrightarrow} V_n\to V=\lim_{\longrightarrow}V_{N(n)}$$
which induces our automorphism $\phi$. Since $\phi$ is invertible, $\tilde \phi$ is also invertible, and since $\tilde \phi$ depends on a multiplicative constant, we conclude that $\phi$ determines a unique element $ \bar{\phi}\in PGL(V)$.

In this way we have constructed an injective homomorphism
$$ \operatorname{Aut}\operatorname{Gr}(k,V)\to PGL(V),\: \phi \mapsto \bar\phi.$$
The inverse homomorphism $$ PGL(V)\to \operatorname{Aut}\operatorname{Gr}(k,V)$$ is obvious because of the natural action of $PGL(V)$ on $\operatorname{Gr}(k,V)$. The statement follows.
\end{proof}

\section{Ind-variety of  maximal ascending flags}
We now consider a particular ind-variety of maximal generalized flags, in fact the simplest case of maximal generalized flags. Let $V$ and $E$ be as above. Define
$Fl(F_E, E, V)$ as the set of all infinite chains $F_E'$ of subspaces of $V$
$$0\subset(F_E')^1\subset\dots\subset(F'_E)^k\subset\dots$$
where $\operatorname{dim}(F'_E)^k=k$ and $(F'_E)^n= F_E^n:=\operatorname{span}\{e_1,\dots,e_n\}$ for large enough $n$. This set has an obvious structure of ind-variety as
$$Fl(F_E,E,V)=\lim_{\longrightarrow}Fl(F_E^n)$$ 
where $Fl(F_E^n)$ stands for the variety of maximal flags in the finite-dimensional vector space $F_E^n$.

Denote by $GL(E,V)$ the subgroup of $GL(V)$  of automorphisms of $V$ which keep all but finitely many elements of $E$ fixed. The elements of $GL(E,V)$ are the $E$-{\it finitary} automorphisms of $V$.

\begin{prop}
$$\operatorname{Aut}Fl(F_E,E,V)=P(GL(E,V)\cdot B_E)$$
where $B_E\subset GL(V)$ is the stabilizer of the chain $F_E$ in $GL(V)$ and $GL(E,V)\cdot B_E$ is the subgroup of $GL(V)$ generated by $GL(E,V)$ and $B_E$.
\end{prop}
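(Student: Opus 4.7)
The plan is to imitate the structure of the proof of Proposition 1. I would establish the inclusion $P(GL(E, V) \cdot B_E) \hookrightarrow \operatorname{Aut} Fl(F_E, E, V)$ by exhibiting the natural action, and for the reverse inclusion reduce an arbitrary automorphism to one fixing $F_E$ using transitivity of $GL(E, V)$, then extract a linear operator using the finite approximations $Fl(F_E^n) \hookrightarrow Fl(F_E, E, V)$.

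For the easy direction, every $g \in GL(E, V) \cdot B_E$ preserves $F_E^n$ for all $n$ beyond some threshold $N_g$: elements of $B_E$ preserve all $F_E^n$ by definition of the stabilizer, while an $E$-finitary operator fixing $e_i$ for $i > N$ preserves each $F_E^n$ with $n \geq N$. Consequently $g$ maps $Fl(F_E, E, V)$ to itself, defining the action. Faithfulness modulo $\mathbb{C}_{mult} \operatorname{Id}$ is immediate since an operator fixing every flag in $Fl(F_E, E, V)$ restricts to a scalar on each $F_E^N$ by the classical finite-dimensional result, hence is a global scalar.

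For the reverse inclusion, given $\phi \in \operatorname{Aut} Fl(F_E, E, V)$, I would first show that $GL(E, V)$ acts transitively on $Fl(F_E, E, V)$: any two such flags agree with $F_E$ past some common depth $N$, and a $GL_N$-element carrying one truncation to the other extends to a finitary operator via the identity on $\operatorname{span}\{e_{N+1}, e_{N+2}, \ldots\}$. Hence there is $h \in GL(E, V)$ with $h \cdot F_E = \phi(F_E)$; replacing $\phi$ by $h^{-1}\phi$ I may assume $\phi(F_E) = F_E$. Each inclusion $Fl(F_E^n) \hookrightarrow Fl(F_E, E, V)$ is then sent by $\phi$ to a basepoint-preserving morphism $\phi_n : Fl(F_E^n) \to Fl(F_E^{N(n)})$ for some $N(n) \geq n$. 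The next step is to classify $\phi_n$ in analogy with Theorem 1 of \cite{PT}: each $\phi_n$ should be linear, induced by a linear embedding $V_n \hookrightarrow V_{N(n)}$ together with a choice of extending flag. I would argue this via the Grassmannian projections $\pi_k : Fl(F_E^n) \to \operatorname{Gr}(k, V_n)$, applying Theorem 1 of \cite{PT} to the resulting map of Grassmannians at each type $k$, and then assembling the linear maps across $n$ to obtain a global operator $\tilde\phi \in GL(V)$ preserving every $F_E^n$. Such $\tilde\phi$ lies in $B_E$, and combined with the finitary element $h$ from the transitivity step produces a preimage of $\phi$ in $GL(E, V) \cdot B_E$.

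The main obstacle is the classification of $\phi_n$, specifically excluding "flipped" options in which $\phi_n$ would intertwine $\pi_k$ with $\pi_{n-k}$ rather than with $\pi_k$. In finite type $A_{n-1}$ the Dynkin diagram automorphism $k \mapsto n - k$ presents precisely this alternative, and ruling it out requires the asymmetry of $Fl(F_E, E, V)$: the chain $F_E$ has a well-defined smallest step but extends indefinitely, so a flip cannot propagate consistently through the ind-structure as $n$ grows. Making this precise, in the spirit of how option (ii) is excluded in Proposition 1 via incompatibility of the resulting ind-subvariety with $\operatorname{Gr}(k, V)$, is the crux of the argument.
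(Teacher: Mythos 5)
Your outline gets the two ``soft'' ingredients right, and they do appear in the paper's proof in essentially the same form: the action of $P(GL(E,V)\cdot B_E)$ on $Fl(F_E,E,V)$ giving the easy inclusion, and the use of transitivity of $GL(E,V)$ to compose an arbitrary automorphism with a finitary element so that the result fixes $F_E$ and hence lands in $PB_E$. But the hard direction of your argument rests entirely on a step you yourself label the crux and do not carry out: a classification of the basepoint-preserving morphisms $\phi_n : Fl(F_E^n)\to Fl(F_E^{N(n)})$ between full flag varieties of \emph{different} ranks. Theorem 1 of \cite{PT} classifies linear embeddings of Grassmannians only, and your proposed reduction to it via the projections $\pi_k$ presupposes that $\phi_n$ intertwines $\pi_k$ on the source with some $\pi_{\sigma(k)}$ on the target --- i.e.\ that $\phi_n$ respects the fibration structure of the flag varieties. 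That is not automatic for an arbitrary morphism and is not established anywhere in your sketch; likewise the exclusion of the ``flip'' $k\mapsto n-k$ is named but not proved. As written, the reverse inclusion is therefore not a proof.

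The paper avoids this entire difficulty by never looking at morphisms of finite-dimensional flag varieties. Instead it uses the surjections $Fl(F_E,E,V)\to \operatorname{Gr}(k,V)$ of \emph{ind}-varieties: an automorphism of $Fl(F_E,E,V)$ induces, for each $k$, an automorphism of the ind-grassmannian $\operatorname{Gr}(k,V)$, which by Proposition 1 is given by a single invertible operator $\psi_k$ up to scalar. The new ingredient is then Lemma 1 of the paper: if $\psi_{k-1}(W_{k-1})\subset\psi_k(W_k)$ for all nested pairs of subspaces of dimensions $k-1$ and $k$, then $\psi_{k-1}=c\,\psi_k$. This forces the map $\operatorname{Aut}Fl(F_E,E,V)\to\prod_k PGL(V)$ to factor through the diagonal, embedding the automorphism group into one copy of $PGL(V)$, after which your transitivity argument identifies the image as $P(GL(E,V)\cdot B_E)$. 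If you want to salvage your approach, you would either need to prove the classification of the $\phi_n$ independently, or replace that step by something like Lemma 1, which is exactly the device that makes the problem tractable without any rigidity theorem for flag-variety morphisms.
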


We start with a lemma.

\begin{lemma}
Fix $k\geq2$. Let $\psi_{k-1},$ $ \psi_k:V\to V$ be invertible linear operators such that $\psi_{k-1}(W_{k-1})\subset\psi_k(W_k)$ for any pair of subspaces $W_{k-1}\subset W_k$ of $V$ with $\operatorname{dim}W_{k-1}=k-1$, $\operatorname{dim}W_k=k$. Then $\psi_{k-1}=c\psi_k$ for some $0\neq c\in\mathbb C$.

\end{lemma}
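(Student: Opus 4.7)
I would reduce the two-operator statement to a single-operator one. Set $\varphi := \psi_k^{-1} \circ \psi_{k-1}$; this is invertible, and the hypothesis becomes the condition that $\varphi(W_{k-1}) \subset W_k$ for every pair $W_{k-1} \subset W_k$ of subspaces of $V$ of dimensions $k-1$ and $k$ respectively. The desired conclusion $\psi_{k-1} = c\,\psi_k$ is equivalent to $\varphi = c\cdot\mathrm{id}_V$ for some $c \in \mathbb{C}^\times$, so it is enough to prove that $\varphi$ acts as a scalar.

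The first step is to fix a $(k-1)$-dimensional subspace $W_{k-1}$ and to intersect the hypothesis over all $k$-dimensional overspaces of $W_{k-1}$. The key fact is the identity
\[
\bigcap_{\substack{W_k \supset W_{k-1} \\ \dim W_k = k}} W_k \;=\; W_{k-1},
\]
which I would establish by observing that for any $u \notin W_{k-1}$ one can pick $w \in V \setminus (W_{k-1} + \mathbb{C}u)$ --- possible because $V$ is infinite-dimensional --- so that $W_k := W_{k-1} + \mathbb{C}w$ is a $k$-dimensional overspace of $W_{k-1}$ not containing $u$. Combined with the hypothesis this yields $\varphi(W_{k-1}) \subset W_{k-1}$, and invertibility promotes this to $\varphi(W_{k-1}) = W_{k-1}$ for every $(k-1)$-dimensional subspace.

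The second step pushes the statement down to lines. For any one-dimensional subspace $L \subset V$ I would show that the intersection of all $(k-1)$-dimensional subspaces containing $L$ equals $L$: for $k=2$ this is tautological, and for $k \geq 3$ it follows by successively enlarging $L$ with vectors chosen outside appropriate finite-dimensional subspaces, so as to exhibit a $(k-1)$-dimensional subspace containing $L$ but missing any prescribed $u \notin L$. Together with Step~1, this gives $\varphi(L) \subset L$, hence $\varphi(L) = L$, for every line $L$ of $V$.

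Finally, I would invoke the standard fact that a linear operator preserving every line is a scalar: writing $\varphi(v) = \lambda_v v$ for each nonzero $v$ and expanding $\varphi(v+w) = \lambda_{v+w}(v+w)$ for linearly independent $v, w$ forces $\lambda_v = \lambda_w$. Thus $\varphi = c\cdot\mathrm{id}_V$ for some $c \in \mathbb{C}^\times$, and $\psi_{k-1} = c\,\psi_k$ follows. No single step is a serious obstacle, but both intersection identities rely essentially on $V$ being infinite-dimensional, and this is where the ambient hypothesis $V = \bigcup_n V_n$ is used.
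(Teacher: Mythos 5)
Your proposal is correct, but it takes a genuinely different route from the paper. You reduce to the single operator $\varphi=\psi_k^{-1}\circ\psi_{k-1}$ and show it fixes every line via two intersection identities (the intersection of all $k$-dimensional overspaces of a fixed $(k-1)$-dimensional subspace is that subspace, and the intersection of all $(k-1)$-dimensional subspaces through a line is the line), then invoke the standard fact that a line-preserving operator is a scalar. The paper instead argues by contradiction: assuming $\psi_{k-1}\neq c\psi_k$, it picks a vector $v$ with $Z=\operatorname{span}\{\psi_{k-1}(v),\psi_k(v)\}$ two-dimensional and exhibits two $k$-dimensional subspaces $W_k$, $W_k'$ meeting exactly in $\mathbb{C}v$ whose images under $\psi_k$ both contain $Z$, contradicting injectivity of $\psi_k$; note that the existence of such a $v$ under the contrary assumption is the same ``line-preserving implies scalar'' fact you use at the end, so both arguments reduce the lemma to the claim that $\psi_{k-1}(v)$ and $\psi_k(v)$ are proportional for every $v$, and differ only in how they prove it. The paper's version is shorter, using just two well-chosen subspaces, but needs $2k-1$ independent vectors; yours is more systematic, isolates the reduction to $\varphi$ cleanly, and in fact works for any $V$ of dimension at least $k+1$, so the reliance on infinite-dimensionality you flag at the end is milder than you suggest.
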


\begin{proof}
Assume the contrary. Let $v$ be a vector in $V$ such that the space  $Z:=\operatorname{span}_{\mathbb{C}}\{\psi_{k-1}(v), \psi_k(v)\}$ has dimension 2. Extend $v$ to a basis $v=v_1,v_2,\dots$ of $V$. Then, setting $W_k=\operatorname{span}_{\mathbb C}\{v_1,\dots, v_k\}$ and $W_{k-1}=\operatorname{span}_{\mathbb C}\{v_1,\dots, v_{k-1}\}$, we see that the condition $\psi_{k-1}(W_{k-1})\subset\psi_k(W_k)$ implies $Z\subset\psi_k(W_k) $. Similarly, setting $W'_{k}=\operatorname{span}_{\mathbb C}\{v_1,v_{k+1}, v_{k+2}\dots,v_{2k-1}\}$ and $W'_{k-1}=\operatorname{span}_{\mathbb C}\{v_1,v_{k+1}, v_{k+2}\dots,v_{2k-2}\}$ we have $Z\subset \psi_k(W'_{k})$.
 However clearly
 $$\operatorname{dim}(W_k \cap W'_k)=1,$$
 hence the dimension of the intersection $\psi_k(W_k)\cap\psi_k(W'_{k})$ must also be $1$ due to the invertibility of $\psi_k$.
Contradiction.

\end{proof} 

\begin{proof}[Proof of Proposition 2.] We first embed $A:=\operatorname{Aut}Fl(F_E,E,V)$ into the group $PGL(V)$. For this we consider the obvious embedding $$A\hookrightarrow\Pi_{i=1}^{\infty}\operatorname{Aut} \operatorname{Gr}(i,V)$$
arising from the diagram of surjective morphisms of ind-varieties

$$
\xymatrix{
&{}Fl(F_E,E,V)\ar[dl]\ar[d]\ar[dr]\\
\mathbb{P}(V)=\operatorname{Gr}(1,V)&{}\operatorname{Gr}(2,V)\:\:\:\:\:\:\:\:\dots&{}\operatorname{Gr}(k,V)\dots\:\:\:.&{}
}
$$

By Proposition 1, the groups $\operatorname{Aut}\operatorname{Gr}(k,V)$ are isomorphic to $PGL(V)$ for all $k\in\mathbb Z_{>0}$. Moreover, it is clear that the injective homomorphism $A\to\Pi_k PGL(V)$ factors through the diagonal of $\Pi_k PGL(V)$ since Lemma $1$ shows that an automorphism from $A$ induces necessarily the same element in $PGL(V)$ via any projection $Fl(F_E,E,V)\to\operatorname{Gr}(k,V)$.

It remains to determine which elements of the group $PGL(V)$ arise as images of elements of $A$. It is clear that this image contains both $PGL(E,V)$ and $PB_E$ as each of these groups acts faithfully on $Fl(F_E,E,V)$. Indeed, the fact that $PGL(E,V)$ acts on $Fl(F_E,E,V)$ is clear. To see that $PB_E$ acts on $Fl(F_E,E,V)$ one notices that  for any $F'_E\in Fl(F_E,E,V)$ and any $\gamma\in PB_E$, the flag $\gamma(F'_E)$ differs from $F_E$ only in finitely many positions, hence is a point on $Fl(F_E,E,V).$ 

On the other hand, it is clear that the image $\bar\phi\in PGL(V)$ of $\phi\in A$ is contained in $P(GL(E,V)\cdot B_E)$. Indeed the composition $\psi\circ\bar\phi$ with a suitable element of $PGL(E,V)$ will fix the point $F_E$ on $Fl(F_E,E,V)$. This means that $\psi\circ\bar\phi\in PB_E$. Therefore the image of $A$ in $PGL(V)$ is contained in $P(GL(E,V)\cdot B_E)$,                  and we are done.

\end{proof}

\section{Discussion}
 First, Proposition $1$ can be generalized to ind-varieties of the form $Fl(F,E,V)$ where $F$ is a finite chain consisting only of finite-dimensional subspaces of $V$, or only of subspaces of finite codimension of $V$. The precise definition of the ind-varieties $Fl(F,E,V)$ is given in \cite{DP}. In these cases, the respective automorphism groups are always isomorphic to $PGL(V)$, however in the case of finite codimension there is a natural isomorphism with $PGL(V_*)$.

We now point out some differences with the case of ordinary flag varieties. A first obvious difference is the following. Despite the fact that $\operatorname{Gr}(k,V)=PGL(E,V)/P_k$, where $P_k$ is the stabilizer in $PGL(E,V)$ of a $k$-dimensional subspace of $V$, the automorphism group of $\operatorname{Gr}(k,V)$ is much larger than $PGL(E,V)$. Therefore $\operatorname{Gr}(k,V)$ is a quotient of any subgroup $G$ satisfying $PGL(E,V)\subset G\subset PGL(V)$, and there is quite a variety of such subgroups. Similar comments apply to the other examples we consider.

Next, we note that the automorphism group of an ind-variety of generalized flags is in general not naturally embedded into $PGL(V)$. Indeed, the case of the cofinite ind-grassmannian $\operatorname{Gr}(W,E,V)$ shows that the natural isomorphism $\operatorname{Aut}\operatorname{Gr}(W,E,V)= PGL(V_*)$ does not embed $\operatorname{Aut}\operatorname{Gr}(W,E,V)$ into $PGL(V)$ by duality, but only embeds $\operatorname{Aut}\operatorname{Gr}(W,E,V)$ into the much larger group $PGL((V_*)^*)$ in a way that its image does not keep the subspace $V\subset(V_*)^*$ invariant. This is  clearly an infinite-dimensional phenomenon.

Finally, recall that the automorphism groups of all flag varieties of the group $GL(n)$ are isomorphic, and inclusions of parabolic subgroups induce isomorphisms of automorphism groups. This note shows that the latter statement is not true for the group $GL(E,V)$  as the injection $\operatorname{Aut}Fl(F_E,E,V)\hookrightarrow \operatorname{Aut}\operatorname{Gr}(k,V)$ constructed in the proof of Proposition $2$ is proper (and both   $\operatorname{Gr}(k,V)$ and $Fl(F_E,E,V)$ are homogeneous ind-varieties for $GL(E,V)$). 

We hope that the above differences motivate a more detailed future study of the automorphism groups of arbitrary ind-varieties of generalized flags.

\end{document}